\numberwithin{equation}{section}  
\newtheorem{theorem}{Theorem}[section]
\newtheorem{lemma}{Lemma}[section]
\newtheorem{corollary}{Corollary}[section]
\newenvironment{proof}[1][Proof]{\begin{trivlist}
\item[\hskip \labelsep {\bfseries #1}]}{\end{trivlist}}
\begin{document}
\title {A Derivative-Hilbert operator Acting on Dirichlet spaces}
\author{ Yun Xu\footnote{ E-mail address: xun\_99\_99@163.com} \quad Shanli Ye\footnote{Corresponding author.~E-mail address:slye@zust.edu.cn}\quad Zhihui Zhou   \\
\small \it School of Science, Zhejiang University of Science and Technology, Hangzhou 310023, China}

 \date{}
\maketitle
\begin{abstract}
Let $\mu$ be a positive Borel measure on the interval $[0,1)$. The Hankel matrix $\mathcal{H}_{\mu}=(\mu_{n,k})_{n,k\geq 0}$ with entries $\mu_{n,k}=\mu_{n+k}$, where $\mu_{n}=\int_{[0,1)}t^nd\mu(t)$, induces  formally the operator as $$\mathcal{DH}_\mu(f)(z)=\sum_{n=0}^\infty\left(\sum_{k=0}^\infty \mu_{n,k}a_k\right)(n+1)z^n ,  z\in \mathbb{D},$$
where $f(z)=\sum_{n=0}^{\infty}a_nz^n$ is an analytic function in $\mathbb{D}$. In this paper, we characterize those positive Borel measures on $[0, 1)$ for which $\mathcal{DH}_\mu$ is bounded (resp. compact) from Dirichlet spaces $\mathcal{D}_\alpha ( 0<\alpha\leq2 )$  into $\mathcal{D}_\beta ( 2\leq\beta<4 )$.\\
{\small\bf Keywords}\quad {Hilbert operator, Dirichlet space, Carleson measure
 \\
  {\small\bf 2020 MR Subject Classification }\quad 47B35, 30H99\\}
\end{abstract}
\maketitle

\section{Introduction}
Let $\mathbb{D}=\{z\in \mathbb{C}:|z|<1\}$ be the open unit disk in the complex plane $\mathbb{C}$ and let $H(\mathbb{D})$ denote the class of all analytic functions in $\mathbb{D}$.
For $0<p<\infty$, the Bergman space $A^p$ consists of all functions $f\in H(\mathbb{D})$ for which
$$\|f\|_{A^p}^{p}=\int_{\mathbb{D}}|f(z)|^{p}dA(z)<\infty,$$
where $dA$ denotes the normalized Lebesgue area measure on $\mathbb{D}$. We refer to \cite{Duren} for more informations about Bergman spaces.

For $\alpha \in \mathbb{R}$, the Dirichlet space $\mathcal{D}_\alpha$ consists of all functions $f(z)=\sum_{n=0}^\infty a_{n}z^n \in H(\mathbb{D})$ for which
$$
\|f\|_{\mathcal{D}_\alpha}^2= \sum_{n=0}^\infty (n+1)^{1-\alpha}|a_{n}|^2<\infty.
$$
We get the classical Dirichlet space $\mathcal{D} = \mathcal{D}_0$  if $\alpha=0$ (See \cite{El}), we get the Hardy space $H^2 = \mathcal{D}_1$ if $\alpha = 1$ (see \cite{Dur,Zhu}), and we obtain the Bergman space $A^2 = \mathcal{D}_2$ if $\alpha = 2$. We mention \cite{El} for a complete information on Dirichlet spaces.

Suppose that $\mu$ is a positive Borel measure on [0,1). We define $H_\mu$ to be the Hankel matrix $(\mu_{n,k})_{n,k\geq0}$ with entries $\mu_{n,k}=\mu_{n+k}$, where $\mu_{n}=\int_{[0,1)}t^nd\mu(t)$. The matrix $\mathcal{H}_\mu$ can be seen as an operator on $f(z)=\sum_{k=0}^\infty a_k z^k\in {H(\mathbb{D})}$ by its action on the Taylor coefficients: $\{a_n\}_{n\geq0} \rightarrow \{\sum_{k=0}^{\infty}\mu_{n,k}a_k\}_{n\geq0}$.  Furthermore, we can formally induce the Hankel operator $\mathcal{H}_\mu$ as
$$
\mathcal{H}_\mu(f)(z)=\sum_{n=0}^\infty(\sum_{k=0}^\infty \mu_{n,k}a_k)z^n ,  z\in \mathbb{D},
$$
whenever the right hand side makes sense and defines an analytic function in $\mathbb{D}.$ If we take the measure to be the Lebesgue measure, $\mathcal{H}_\mu$ is just the Hilbert operator. So $\mathcal{H}_\mu$ is also called generalized Hilbert operator.

The operator $\mathcal{H}_\mu$ have been extensively studied in  \cite{Bao,Cha,Duren,Zhu}. Galanopoulos and Pel$\acute{\text{a}}$ez \cite{Ga} characterized those measures $\mu$ supported on $[0,1)$ such that the generalized Hilbert operator $\mathcal{H}_\mu$ is well defined and is bounded on $\mathcal{H}^1$. Chatzifountas and Girela \cite{Cha} described those measures $\mu$ for which $\mathcal{H}_\mu$ is a bounded operator from $\mathcal{H}^p$ into $\mathcal{H}^q$, where $0<p,q<\infty$. Diamantopoulos \cite{Diama} gave many results about the operator induced by Hankel matrics on Dirichlet space. In 2018, Girela\cite{Gire} introduced the operators $\mathcal{H}_\mu$ acting on certain conformally invariant spaces.

In \cite{Ye,YeS},  the second and the third authors first used the Hankel matrix  defined the Derivative-Hilbert operator  $\mathcal{DH}_\mu$ as
$$\mathcal{DH}_\mu(f)(z)=\sum_{n=0}^\infty(\sum_{k=0}^\infty \mu_{n,k}a_k)(n+1)z^n.$$
It is closed related to the generalized Hilbert operator, that is, $$\mathcal{DH}_\mu(f)(z)=(z\mathcal{H}_\mu(f)(z))'.$$
So we called $\mathcal{DH}_\mu$ to be the Derivative-Hilbert operator. And the second and the third authors characterized the measure $\mu$ for which $\mathcal{DH}_\mu$ is a bounded (resp.compact) operator from $A^p$ into $A^q$ for some $p,~q$ in \cite{YeS}. In \cite{Ye}, they also characterized the measure $\mu$ for which $\mathcal{DH}_\mu$ is a bounded (resp. compact) operator on the Bloch space.

Let us recall the definition of the Carleson-type measure, which is a useful tool for learning about Banach spaces of analytic functions. We refer to \cite{D,Ste} for some results about Carleson measures.

If $I\subset\partial \mathbb{D}$ in an arc, $|I|$ denotes the length of $I$, the Carleson square $S(I)$ is defined as
$$
S(I)=\{z=re^{it}:e^{it}\in I, 1-\frac{|I|}{2\pi}\leq r \leq 1\}.
$$
Suppose $0 < p < \infty$ and $\mu$ is a positive Borel measure on $\mathbb{D}$, then we called $\mu$ to be a $s-$Carleson measure if there exists a positive constant $C$ such that
$$
\sup_{I}\frac{\mu(S(I))}{|I|^s}<\infty, \ for \ any \  interval \  I\subset\partial \mathbb{D}.
$$
We called that $\mu$ is a vanishing $s$-Carleson measure if and only if  $\mu$ satisfies
$$
\lim_{|I|\rightarrow 0}\frac{\mu(S(I))}{|I|^s}=0.
$$

If $\mu$ is a Borel measure on [0,1), it can been seen as a Borel measure on $\mathbb{D}$ by identifying it as $\hat{\mu}(A)=\mu(A\cap[0,1))$, for every Borel set $A\subset \mathbb{D}$. In this way, for $0<s<\infty$, we called $\mu$ to be a $s$-Carleson measure if there exists a positive constant $C$ such that
$$\mu([t,1))\leq C(1-t)^s,\quad t\in[0,1).$$
Also, $\mu$ is a vanishing $s$-Carleson measure on $[0,1]$ if $\mu$ satisfies
$$
\lim_{t\rightarrow 1^{-}}\frac{\mu([t,1))}{(1-t)^s}=0.
$$
Other Carleson type measures on $[0,1)$ have the similar statements.

In this paper, we mainly characterize the positive Borel measure $\mu$ for which the Derivative-Hilbert operator $\mathcal{DH}_{\mu}$ is bounded (resp.compact) from Dirichlet spaces $\mathcal{D}_\alpha ( 0<\alpha\leq2 )$  into $\mathcal{D}_\beta ( 2\leq\beta<4 )$.

Throughout this paper, $C$ denotes a positive constant which depends only on the displayed parameters but not necessarily the same from one occurrence to the next. In addition, we say that $A\gtrsim B$ if there exist a constant $C$ (independent of $A$ and $B$) such that $A\gtrsim CB$, and $A\lesssim B$ is the same as $A\gtrsim B$. In addition, the symbol $A\thickapprox B$ means that $A\lesssim B\lesssim A$.

\section{ Main results}
\hspace*{1.25em} We shall first give a  sufficient condition such that the operator $\mathcal{DH}_{\mu}$ is well defined on the Dirichlet space $\mathcal{D}_{\alpha}$, for $\alpha\in\mathbb{R}$. And we characterize the measure $\mu$ such that $\mathcal{DH}_{\mu}$ is bounded from Dirichlet spaces $\mathcal{D}_\alpha ( 0<\alpha\leq2 )$  into $\mathcal{D}_\beta ( 2\leq\beta<4 )$.
\begin{theorem}\label{Th2.1}
Suppose that $\alpha\in\mathbb{R}$ and let $\mu$ be   a positive Borel measure on $[0,1)$. If
 the proposition $\mu_n=O({n^{-(\frac{\alpha}{2}+\epsilon)}})$ is hold for some $\epsilon>0$, then $\mathcal{DH}_\mu$ is well defined  on $\mathcal{D}_\alpha.$
\end{theorem}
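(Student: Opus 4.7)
The plan is to show that, for every $f(z)=\sum_{n=0}^\infty a_nz^n\in\mathcal{D}_\alpha$, the coefficients
\[
b_n:=(n+1)\sum_{k=0}^\infty\mu_{n+k}\,a_k
\]
of the formal power series defining $\mathcal{DH}_\mu(f)$ are well-defined (the inner series converges absolutely) and grow at most polynomially in $n$. Since a power series whose coefficients are polynomially bounded has radius of convergence at least one, this immediately yields $\mathcal{DH}_\mu(f)\in H(\mathbb{D})$.

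The key inequality is obtained from the Cauchy-Schwarz inequality with the dual weights $(k+1)^{1-\alpha}$ and $(k+1)^{\alpha-1}$:
\[
\left(\sum_{k=0}^\infty\mu_{n+k}|a_k|\right)^{\!2}\le \|f\|_{\mathcal{D}_\alpha}^2\,S_n,\qquad S_n:=\sum_{k=0}^\infty\mu_{n+k}^2\,(k+1)^{\alpha-1}.
\]
Thus everything reduces to checking that $S_n$ is finite for each $n\ge 1$ and that it grows at most polynomially in $n$; from this both the absolute convergence of the inner series and the polynomial bound on $|b_n|\le(n+1)\|f\|_{\mathcal{D}_\alpha}\sqrt{S_n}$ follow at once.

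To estimate $S_n$, I would insert the hypothesis $\mu_{n+k}^2\le C^2(n+k+1)^{-\alpha-2\epsilon}$ and split the sum at $k=n$. In the head $0\le k\le n$ the factor $n+k+1$ is comparable to $n$, which pulls out a factor $n^{-\alpha-2\epsilon}$ and leaves the elementary sum $\sum_{k\le n}(k+1)^{\alpha-1}$; the latter is bounded by a polynomial in $n$ (of order $n^{\alpha}$, $\log n$, or a constant according as $\alpha>0$, $\alpha=0$, or $\alpha<0$). In the tail $k>n$ one has $n+k+1\asymp k$, so the summand is $\asymp(k+1)^{-1-2\epsilon}$ and its sum is $O(n^{-2\epsilon})$. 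Combining the two contributions yields a polynomial bound on $S_n$ in every sign regime of $\alpha$.

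I do not foresee any substantive obstacle; the only bookkeeping point is to treat the three sign regimes of $\alpha$ uniformly in the head estimate, but since the conclusion demands only analyticity on $\mathbb{D}$ (and hence only polynomial growth of $|b_n|$), the crude head/tail split above is amply sufficient, and no sharp decay rate for $S_n$ is needed.
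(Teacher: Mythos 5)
Your proposal is correct and follows essentially the same route as the paper: both apply Cauchy--Schwarz with the conjugate weights $(k+1)^{(\alpha-1)/2}$ and $(k+1)^{(1-\alpha)/2}$ and reduce everything to estimating $\sum_{k}(k+1)^{\alpha-1}(n+k+1)^{-\alpha-2\epsilon}$. Your head/tail split at $k=n$ is a slightly more careful bookkeeping of that sum (the paper's one-line domination by $\sum_k(k+1)^{-1-2\epsilon}$ tacitly uses $\alpha+2\epsilon\ge 0$), and you make explicit the final step --- polynomial growth of the coefficients implies analyticity on $\mathbb{D}$ --- which the paper leaves implicit.
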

\begin{proof} Suppose $f(z)=\sum_{n=0}^{\infty}a_nz^n\in\mathcal{D}_{\alpha}$. By Cauchy-Schwarz inequality and  \cite[Proposition 1]{Cha}, we obtain that
$$
\begin{aligned}
\left|\sum_{k=0}^\infty\mu_{n,k}a_k\right|&\leq\sum_{k=0}^\infty\mu_{n,k}|a_k|\lesssim\sum_{k=0}^\infty\frac{|a_k|}{(n+k+1)^{\frac{\alpha}{2}+\epsilon}}\\
&\thickapprox\sum_{k=0}^\infty(k+1)^{\frac{\alpha-1}{2}}\frac{1}{(n+k+1)^{\frac{\alpha}{2}+\epsilon}}(k+1)^{\frac{1-\alpha}{2}}|a_k|\\
&\lesssim\left(\sum_{k=0}^\infty\frac{(k+1)^{\alpha-1}}{(n+k+1)^{\alpha+2\epsilon}}\right)^{\frac{1}{2}}\left(\sum_{k=0}^\infty(k+1)^{1-\alpha}|a_k|^2\right)^{\frac{1}{2}}\\
&\lesssim\left(\sum_{k=0}^\infty\frac{1}{(k+1)^{1+2\epsilon}}\right)^{\frac{1}{2}}\|f\|_{\mathcal{D}_\alpha}<\infty.
\end{aligned}
$$
This shows that the operator $\mathcal{DH}_{\mu}$ is well defined on $\mathcal{D}_\alpha.$
\end{proof}

Next, we import an auxiliary lemma to prove the main theorem in this paper.
\begin{lemma}\label{le21}
(\cite[Theorem 318]{Hardy}) Let $K(x,y)$ be a real function of two variables and has the following properties:

$(i)$  $ K(x,y)$ is non-negative and homogeneous of degree -1;

$(ii)$
$$\int_{0}^{\infty}K(x,1)x^{-\frac{1}{2}}dx = \int_{0}^{\infty}K(1,y)y^{-\frac{1}{2}}dy =C;$$

$(iii)$
$K(x,1)x^{-\frac{1}{2}}$ is a strictly decreasing functions of $x$, and $K(1,y)y^{-\frac{1}{2}}$ of $y$; or, more generally;

$(iii')$
$K(x,1)x^{-\frac{1}{2}}$ decreases from $x=1$ onwards, while the interval $(0,1)$ can be divided into two parts, $(0,\xi)$ and $(\xi,1)$, of which one may be null, in the first of which it decreases and in the second of which it increases; and $K(1,y)y^{-\frac{1}{2}}$ has similar properties; and $K(x,x)=0$.\\
Then for every sequence $\{a_n\}_{n\geq0}$ in $l^2$, we get
$$
\sum_{n=1}^\infty \left(\sum_{k=1}^\infty K(n,k)a_k\right)^2\leq C^2 \sum_{n=1}^\infty a_n^2.
$$
\end{lemma}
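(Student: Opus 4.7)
The plan is to prove this classical Hilbert-type double-series inequality by a Schur-type argument with multiplicative weight $(n/k)^{1/4}$. Writing
$$K(n,k)a_k = \bigl[K(n,k)^{1/2}(n/k)^{1/4}\bigr]\cdot\bigl[K(n,k)^{1/2}(k/n)^{1/4} a_k\bigr]$$
and applying Cauchy--Schwarz in $k$ gives
$$\left(\sum_{k=1}^\infty K(n,k) a_k\right)^{2}\leq\left(\sum_{k=1}^\infty K(n,k)(n/k)^{1/2}\right)\left(\sum_{k=1}^\infty K(n,k)(k/n)^{1/2} a_k^{2}\right).$$
Summing in $n$ and interchanging the order of summation in the second factor reduces the proof to verifying the two uniform estimates
$$S_{1}:=\sup_{n\geq 1}\sum_{k=1}^\infty K(n,k)(n/k)^{1/2}\leq C,\qquad S_{2}:=\sup_{k\geq 1}\sum_{n=1}^\infty K(n,k)(k/n)^{1/2}\leq C.$$

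To establish $S_{1}$ I would exploit the degree $-1$ homogeneity in $(i)$: writing $K(n,k)=n^{-1}K(1,k/n)$ converts the sum into
$$\sum_{k=1}^\infty K(n,k)(n/k)^{1/2}=\sum_{k=1}^\infty g(k/n)\cdot\tfrac{1}{n},\qquad g(y):=K(1,y)y^{-1/2},$$
which is a Riemann sum on the uniform partition $\{k/n\}_{k\geq 1}$ of $(0,\infty)$ for the integral $\int_{0}^{\infty}g(y)\,dy=C$ supplied by $(ii)$. Under the strict monotonicity hypothesis $(iii)$, $g$ is decreasing, so each term $g(k/n)/n$ is dominated by $\int_{(k-1)/n}^{k/n}g(y)\,dy$; the comparison telescopes to give the upper bound $C$. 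The bound on $S_{2}$ is obtained symmetrically: rescaling via $K(n,k)=k^{-1}K(n/k,1)$ reduces it to a Riemann-sum estimate against $\int_{0}^{\infty}K(x,1)x^{-1/2}\,dx=C$, using the monotonicity of $K(x,1)x^{-1/2}$.

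The main technical obstacle is the weaker hypothesis $(iii')$, where $g$ is only piecewise monotone: decreasing on $[1,\infty)$ and on $(0,\xi)$, but increasing on $(\xi,1)$. On each monotone subinterval the Riemann-sum-versus-integral comparison still applies, but at the turning point $y=\xi$ and at the diagonal $y=1$ one picks up boundary terms that must be absorbed. This is precisely where the extra hypothesis $K(x,x)=0$ (i.e.\ $g(1)=0$) enters: it annihilates the contribution near the diagonal $k=n$, which is the most delicate place, and a small finite number of remaining boundary terms are controlled by the finiteness of $C$. Once $S_{1},S_{2}\leq C$ are secured, Cauchy--Schwarz and Fubini yield
$$\sum_{n=1}^\infty\left(\sum_{k=1}^\infty K(n,k)a_k\right)^{2}\leq S_{1}\sum_{k=1}^\infty a_k^{2}\sum_{n=1}^\infty K(n,k)(k/n)^{1/2}\leq S_{1}S_{2}\sum_{k=1}^\infty a_k^{2}\leq C^{2}\sum_{k=1}^\infty a_k^{2},$$
which is the claimed inequality.
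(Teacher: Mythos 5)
Your argument is correct: the paper itself gives no proof of this lemma, citing it as Theorem~318 of Hardy--Littlewood--P\'olya, and the proof there is precisely your Schur-test decomposition with weight $(n/k)^{1/4}$ followed by the monotone Riemann-sum comparison of $\sum_k n^{-1}g(k/n)$ with $\int_0^\infty g(y)\,dy=C$. Your treatment of case $(iii')$ is only a sketch, but you correctly identify the one point where $K(x,x)=0$ is needed, so the proposal matches the classical proof in both substance and structure.
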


In short, if $f(z)=\sum_{n=0}^{\infty} {a_n}z^n\in \mathcal{H}^2$, we have
$$\sum_{n=1}^\infty \left(\sum_{k=1}^\infty K(n,k)a_k\right)^2\leq C^2 \|f\|_{\mathcal{H}^2}^2.$$
\begin{theorem}\label{th22}
Suppose that $0<\alpha\leq2$, $2\leq\beta<4$, and let $\mu$ be a positive Borel measure on $[0,1)$ which satisfies the condition in Theorem \ref{Th2.1}.
 Then the following conditions are equivalent:

$(i)$ $\mu$ is a $(2-\frac{\beta-\alpha}{2})$-Carleson measure.

$(ii)$ $\mu_n=O\left(\frac{1}{n^{2-\frac{\beta-\alpha}{2}}}\right).$

$(iii)$ $\mathcal{DH}_\mu$ is bounded operator from $\mathcal{D}_\alpha$ into $\mathcal{D}_\beta.$
\end{theorem}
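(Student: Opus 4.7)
The natural strategy is to prove the cycle $(i)\Rightarrow(ii)\Rightarrow(iii)\Rightarrow(i)$. The equivalence $(i)\Leftrightarrow(ii)$ is the classical moments-vs-tail characterization of Carleson-type measures on $[0,1)$: for $(i)\Rightarrow(ii)$, split $\mu_n=\int_{[0,1)}t^n d\mu(t)$ at $t=1-1/n$ and bound each piece using $\mu([t,1))\lesssim(1-t)^s$ (the lower piece via integration by parts against $\mu([t,1))$); for $(ii)\Rightarrow(i)$, simply note $\mu_n\geq(1-1/n)^n\mu([1-1/n,1))\gtrsim\mu([1-1/n,1))$.

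For $(ii)\Rightarrow(iii)$, starting from
$$\|\mathcal{DH}_\mu(f)\|_{\mathcal{D}_\beta}^2=\sum_{n=0}^\infty(n+1)^{3-\beta}\Bigl|\sum_{k=0}^\infty\mu_{n+k}a_k\Bigr|^2,$$
I would use the hypothesis $\mu_{n+k}\lesssim(n+k+1)^{-s}$ with $s=2-(\beta-\alpha)/2$ and set $b_k=(k+1)^{(1-\alpha)/2}|a_k|$, so that $\sum_k b_k^2=\|f\|_{\mathcal{D}_\alpha}^2$. This reduces the desired inequality to Lemma \ref{le21} applied to the kernel
$$K(x,y)=\frac{x^{(3-\beta)/2}\,y^{(\alpha-1)/2}}{(x+y)^s},$$
which is homogeneous of degree $-1$. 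One checks that $K(x,1)x^{-1/2}=x^{(2-\beta)/2}/(1+x)^s$ is strictly decreasing on $(0,\infty)$ when $\beta\geq 2$, with the analogous monotonicity for $K(1,y)y^{-1/2}$ when $\alpha\leq 2$; the normalizing integral evaluates to the Beta value $B\bigl(\alpha/2,(4-\beta)/2\bigr)$, which is finite precisely under $\alpha>0$ and $\beta<4$, so Lemma \ref{le21} applies.

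For $(iii)\Rightarrow(ii)$, I would test boundedness on the family $f_b(z)=(1-bz)^{-\gamma}$ with a fixed $\gamma>\alpha/2$ and $b=1-1/N$, for which standard power-series asymptotics give $\|f_b\|_{\mathcal{D}_\alpha}^2\thickapprox(1-b)^{\alpha-2\gamma}=N^{2\gamma-\alpha}$. Using the integral representation
$$\mathcal{DH}_\mu(f_b)(z)=\sum_{n=0}^\infty(n+1)z^n\int_0^1\frac{t^n}{(1-bt)^{\gamma}}\,d\mu(t)$$
and the positivity of all its Taylor coefficients, I would lower-bound $\|\mathcal{DH}_\mu(f_b)\|_{\mathcal{D}_\beta}^2$ by restricting the outer sum to $n\in[N,2N]$ and the inner integral to $t\in[1-1/N,1)$, where $t^n\gtrsim 1$ and $(1-bt)^{-\gamma}\gtrsim N^\gamma$. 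This gives $\|\mathcal{DH}_\mu(f_b)\|_{\mathcal{D}_\beta}^2\gtrsim N^{4-\beta+2\gamma}\,\mu([1-1/N,1))^2$, and comparing with $\|f_b\|_{\mathcal{D}_\alpha}^2\thickapprox N^{2\gamma-\alpha}$ via the assumed boundedness forces $\mu([1-1/N,1))\lesssim N^{(\beta-\alpha)/2-2}=N^{-s}$, i.e.\ condition $(i)$.

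The main obstacle is the necessity direction $(iii)\Rightarrow(ii)$: one must simultaneously choose the test-function parameter $\gamma$, the localization scale $1-b\sim 1/N$, and the ranges of summation and integration so that all the $N$-exponents balance to recover the sharp Carleson index $s=2-(\beta-\alpha)/2$ rather than a weaker bound (the single-term choice $n=N$ alone misses the target exponent by one power of $N$, which is why a window $n\in[N,2N]$ of length $\thickapprox N$ is needed). Step 2 is computational but transparent once the correct kernel $K(x,y)$ is identified, and Step 1 is routine.
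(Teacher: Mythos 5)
Your proposal is correct, and for the first two implications it coincides with the paper's argument: the moment estimate $(i)\Rightarrow(ii)$ via integration by parts against $\mu([t,1))$ is exactly what the paper does, and your substitution $b_k=(k+1)^{(1-\alpha)/2}|a_k|$ produces precisely the paper's kernel $K(x,y)=x^{(3-\beta)/2}y^{(\alpha-1)/2}(x+y)^{-s}$ with the same Beta-function normalization $B(\alpha/2,2-\beta/2)$ and the same monotonicity checks under $0<\alpha\le2\le\beta<4$ (the paper packages this as a factorization $\mathcal{DH}_\mu=T_\beta\circ S_\mu\circ V_\alpha$ through $\mathcal{H}^2$, but the inequality being proved is identical). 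Where you genuinely diverge is the necessity direction. The paper tests on $f_t(z)=(1-t^2)^{1-\alpha/2}(1-tz)^{-1}$, i.e.\ the fixed exponent $\gamma=1$, keeps the full sum over $n$, and uses the crude bound $\sum_{k=0}^n t^{n+2k}\mu([t,1))\ge(n+1)t^{3n}\mu([t,1))$; you instead take $(1-bz)^{-\gamma}$ with an adjustable $\gamma>\alpha/2$ and localize to the window $n\in[N,2N]$, $t\in[1-1/N,1)$. The two computations balance to the same exponent $s=2-(\beta-\alpha)/2$, but your choice $\gamma>\alpha/2$ is actually the more careful one at the endpoint $\alpha=2$: there the paper's normalization $\|f_t\|_{\mathcal{D}_2}^2\thickapprox1$ is false, since $\sum(n+1)^{-1}t^{2n}$ grows like $\log\frac{1}{1-t^2}$, so the paper's test family only yields the Carleson bound up to a logarithmic loss at $\alpha=2$, whereas keeping $\gamma$ strictly above $\alpha/2$ avoids this entirely. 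The only blemishes in your write-up are cosmetic: you twice label the last step ``$(iii)\Rightarrow(ii)$'' while in fact (correctly) deriving $(i)$, and you should record the routine final remark that controlling $\mu([1-1/N,1))$ for integer $N$ suffices, by monotonicity, to control $\mu([t,1))$ for all $t$.
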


Before giving the proof, let us recall some classical conclusions about the Beta function.
Let
 $$B(s,t)=\int_{0}^{1}x^{s-1}(1-x)^{t-1}dx,$$
where $s,~t>0$, then we called it to be the Beta function. And we also know that the Beta function as
$$B(s,t)=\int_{0}^{\infty}\frac{x^{s-1}}{(1+x)^{s+t}}dx,$$
where $Re(s)>0,Re(t)>0.$ It is known that the value of $B(s,t)$ is closed related to the Gamma function, that is,
$$B(s,t)=\frac{\Gamma(s)\Gamma(t)}{\Gamma(s+t)}.$$

Now we continue to complete the proof of the Theorem \ref{th22}.
\begin{proof}
$(i)\Rightarrow(ii)$.
Let $\mu$ be a finite positive Borel measure on $[0,1)$, we have
$$
|\mu_n| \leq \int_{0}^1 |t|^n d\mu(t)=n\int_{0}^1 t^{n-1}\mu[t,1)dt
$$
Since $\mu$ is a $(2-\frac{\beta-\alpha}{2})$-Carleson measure, we obtain that
$$
\mu[t,1) \lesssim (1-t)^{2-\frac{\beta-\alpha}{2}}, \quad \text{for \text{any}}\  t\in(0,1).
$$
Hence,$$|\mu_n|\ \lesssim n\int_{0}^{1}t^{n-1} (1-t)^{2-\frac{\beta-\alpha}{2}} dt\thickapprox \frac{1}{n^{2-\frac{\beta-\alpha}{2}}}.$$
As well as, $$\mu_n=O({n^{-(2-\frac{\beta-\alpha}{2})}}).$$

$(ii)\Rightarrow(iii)$.
First, we define two operators. For $f(z)=\sum_{n=0}^\infty a_nz^n\in \mathcal{D}_\alpha$, define $V_\alpha(f)$ by the formula
$$V_\alpha(f)(z)=\sum_{n=0}^{\infty}(n+1)^{\frac{1-\alpha}{2}}a_nz^n. $$
Also,  for $g(z)=\sum_{n=0}^\infty b_nz^n \in \mathcal{H}^2$, define $T_\beta(g)$ by the formula
$$T_\beta(g)(z)=\sum_{n=0}^{\infty}(n+1)^{\frac{\beta-1}{2}}b_nz^n.$$

It is easy to check that $V_\alpha$ is a bounded operator from $\mathcal{D}_\alpha$ into $\mathcal{H}^2$, and $T_\beta$ is a bounded operator from $\mathcal{H}^2$ into $\mathcal{D}_\beta$.

Now suppose that $0<\alpha\leq2$ and $2\leq\beta<4$. For $f(z)=\sum_{n=0}^\infty a_nz^n\in \mathcal{H}^2$, we consider a new operator $S_\mu(f)$ as $$S_\mu(f)(z)=\sum_{n=0}^\infty\left(\sum_{k=0}^\infty(n+1)^{\frac{3-\beta}{2}}(k+1)^{\frac{\alpha-1}{2}}\mu_{n,k}a_k\right)z^n.$$
A direct calculation shows that
$$
\begin{aligned}
\|S_\mu(f)(z)\|_{\mathcal{H}^2}^2&=\sum_{n=0}^\infty \left|\sum_{k=0}^\infty(n+1)^{\frac{3-\beta}{2}}(k+1)^{\frac{\alpha-1}{2}}\mu_{n,k}a_k\right|^2\\
&\leq \sum_{n=0}^\infty\left(\sum_{k=0}^\infty(n+1)^{\frac{3-\beta}{2}}(k+1)^{\frac{\alpha-1}{2}}\mu_{n,k}|a_k|\right)^2\\
&\lesssim \sum_{n=0}^\infty\left(\sum_{k=0}^\infty(n+1)^{\frac{3-\beta}{2}}(k+1)^{\frac{\alpha-1}{2}}\frac{|a_k|}{(n+k+2)^{2-\frac{\beta-\alpha}{2}}}\right)^2\\
&=\sum_{n=1}^\infty\left(\sum_{k=1}^\infty n^{\frac{3-\beta}{2}}k^{\frac{\alpha-1}{2}}\frac{|a_{k-1}|}{(n+k)^{2-\frac{\beta-\alpha}{2}}}\right)^2.
\end{aligned}
$$
Let $$K(x,y)=x^{\frac{3-\beta}{2}}y^{\frac{\alpha-1}{2}}\frac{1}{(x+y)^{2-\frac{\beta-\alpha}{2}}}, \ \ x>0, \ y>0. $$
Then we obtain that
$$
\begin{aligned}
\int_{0}^{\infty}K(x,1)x^{-\frac{1}{2}}dx& = \int_{0}^{\infty}\frac{x^{1-{\frac{\beta}{2}}}}{(x+1)^{2-\frac{\beta-\alpha}{2}}}dx = B(2-\frac{\beta}{2},\frac{\alpha}{2}),\\
\int_{0}^{\infty}K(1,y)y^{-\frac{1}{2}}dy &= \int_{0}^{\infty}\frac{y^{{\frac{\alpha}{2}}-1}}{(y+1)^{2-\frac{\beta-\alpha}{2}}}dy = B(\frac{\alpha}{2},2-\frac{\beta}{2}).
\end{aligned}
$$
And it is clear that the functions $K(x,1)x^{-\frac{1}{2}}, K(1,y)y^{-\frac{1}{2}} $ are strictly decreasing.
Applying Lemma \ref{le21}, it follows that
$$
\sum_{n=1}^\infty\left(\sum_{k=1}^\infty n^{\frac{3-\beta}{2}}k^{\frac{a-1}{2}}\frac{|a_{k-1}|}{(n+k)^{2-\frac{\beta-\alpha}{2}}}\right)^2\lesssim \left(B(2-\frac{\beta}{2},\frac{\alpha}{2})\right)^2 \|f\|_{\mathcal{H}^2}^2.
$$
This implies that the operator $S_\mu$ is bounded on $\mathcal{H}^2$.

For each $f\in \mathcal{D}_\alpha$, it is easy to check that
$$
\begin{aligned}
T_\beta \circ S_\mu \circ V_\alpha(f)(z) &= \sum_{n=0}^\infty\left((n+1)^{\frac{\beta-1}{2}}\sum_{k=0}^\infty(n+1)^{\frac{3-\beta}{2}}(k+1)^{\frac{\alpha-1}{2}}(k+1)^{\frac{1-\alpha}{2}}\mu_{n,k}a_k\right)z^n\\
&=\sum_{n=0}^\infty\left(\sum_{k=0}^\infty\mu_{n,k}a_k\right)(n+1)z^n=\mathcal{DH}_\mu(f)(z),\
\end{aligned}
$$
Hence, $\mathcal{DH}_\mu$ is bounded from $\mathcal{D}_\alpha$ into $\mathcal{D}_\beta$.

$(iii)\Rightarrow(i)$.
For $0 < t < 1,$ let $f_t(z)=(1-t^2)^{1-\frac{\alpha}{2}}\sum_{n=0}^\infty t^nz^n.$ We have that
$$\|f_t\|_{\mathcal{D}_\alpha}^2 = (1-t^2)^{2-\alpha}\sum_{n=0}^\infty(n+1)^{1-\alpha}t^{2
n} \thickapprox 1.$$\\
Therefore,
$$
\begin{aligned}
\| \mathcal{DH}_\mu(f_t)\|_{\mathcal{D}_\beta}^2&\thickapprox \sum_{n=0}^\infty(n+1)^{1-\beta}\left(\sum_{k=0}^\infty(n+1)\mu_{n,k}(1-t^2)^{1-\frac{\alpha}{2}}t^{k}\right)^2\\
&\gtrsim (1-t^2)^{2-\alpha}\sum_{n=0}^\infty(n+1)^{3-\beta}\left(\sum_{k=0}^\infty t^k\int_t^1\chi^{n+k}d\mu(\chi)\right)^2\\
&\gtrsim (1-t^2)^{2-\alpha}\sum_{n=0}^\infty(n+1)^{3-\beta}\left(\sum_{k=0}^n t^{n+2k}\mu[t,1)\right)^2.
\end{aligned}
$$
Since $ \mathcal{DH}_\mu$ is bounded from $\mathcal{D}_\alpha$ into $\mathcal{D}_\beta$,  we obtain that
$$
\begin{aligned}
\| \mathcal{DH}_\mu\|_{\mathcal{D}_\beta}^2\|f_t\|_{\mathcal{D}_\beta}^2&\geq \|\mathcal{DH}_\mu(f_t)\|_{\mathcal{D}_\beta}^2\\
&\gtrsim (1-t^2)^{2-a}\sum_{n=0}^\infty(n+1)^{3-\beta}\left(\sum_{k=0}^n t^{n+2k}\mu[t,1)\right)^2\\
&\gtrsim (1-t^2)^{2-a}\sum_{n=0}^\infty(n+1)^{5-\beta}t^{6n}(\mu[t,1))^2\\
&\thickapprox \frac{(\mu[t,1))^2}{(1-t^2)^{4+\alpha-\beta}}.
\end{aligned}
$$
This implies that
$$\mu[t,1)\lesssim (1-t^2)^{2-\frac{\beta-\alpha}{2}},$$
which is equivalent to saying that $\mu$ is a $(2-\frac{\beta-\alpha}{2})$-Carleson measure.
\end{proof}

In particular, if we take $\alpha=\beta=2$ in Theorem $\ref{th22}$, we can obtain the following corollary which the second and the third authors have proved in \cite{YeS}.
\begin{corollary}
The operator $\mathcal{DH}_\mu$ is bounded on $\mathcal{A}^2$ if and only if the measure $\mu$ is a $2$-Carleson measure.
\end{corollary}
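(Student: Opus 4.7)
The plan is to read off this corollary as the diagonal case $\alpha = \beta = 2$ of Theorem \ref{th22}, invoking the identification $\mathcal{A}^2 = \mathcal{D}_2$ recorded in the introduction. Substituting $\alpha = \beta = 2$ into the Carleson exponent appearing in Theorem \ref{th22} gives $2 - (\beta - \alpha)/2 = 2$, so the equivalence of $(i)$ and $(iii)$ there reads exactly as the statement to be proved; there is no need to re-run either the Schur-type estimate based on Lemma \ref{le21} or the test-function computation with $f_t$.

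The only thing to check before quoting Theorem \ref{th22} is its standing hypothesis from Theorem \ref{Th2.1} at the level $\alpha = 2$, namely $\mu_n = O(n^{-(1+\epsilon)})$ for some $\epsilon > 0$. For the sufficiency direction (the $2$-Carleson assumption implies boundedness) I would verify this directly, using the same $\mu_n \le n\int_0^1 t^{n-1}\mu[t,1)\,dt$ estimate that drives the proof of $(i)\Rightarrow(ii)$ in Theorem \ref{th22}: inserting $\mu[t,1) \lesssim (1-t)^2$ and evaluating the resulting Beta integral yields $\mu_n = O(n^{-2})$, which is comfortably $O(n^{-(1+\epsilon)})$ for any $0 < \epsilon < 1$. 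Theorem \ref{Th2.1} then guarantees well-definedness on $\mathcal{D}_2$, and the implication $(i)\Rightarrow(iii)$ of Theorem \ref{th22} delivers boundedness on $\mathcal{A}^2$. For the necessity direction, boundedness on $\mathcal{A}^2$ trivially entails that $\mathcal{DH}_\mu$ is well defined, so the hypothesis of Theorem \ref{th22} is in place and $(iii)\Rightarrow(i)$ there returns the $2$-Carleson conclusion.

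There is essentially no obstacle: the corollary is a pure specialization of Theorem \ref{th22}, and the only bookkeeping is the small verification that the moment growth required by Theorem \ref{Th2.1} is automatically implied by the $2$-Carleson condition. If anything deserves flagging, it is making sure one keeps the two logical directions straight so as not to assume well-definedness as an extra hypothesis in the necessity half of the argument.
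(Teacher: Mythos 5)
Your proposal matches the paper exactly: the corollary is presented there with no separate proof, simply as the specialization $\alpha=\beta=2$ of Theorem \ref{th22} together with the identification $\mathcal{A}^2=\mathcal{D}_2$, and the exponent bookkeeping $2-\frac{\beta-\alpha}{2}=2$ is the only computation involved. Your extra verification that the $2$-Carleson condition automatically supplies the moment hypothesis $\mu_n=O(n^{-(1+\epsilon)})$ required by Theorem \ref{Th2.1} (and that well-definedness is free in the necessity direction) is a point the paper leaves implicit, and you handle it correctly.
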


\begin{theorem}
Suppose that $0<\alpha\leq2$, $2\leq\beta<4$, and let $\mu$ be a positive Borel measure on $[0,1)$ which satisfies the condition in Theorem \ref{Th2.1}.
 Then the following conditions are equivalent:

$(i)$ $\mu$ is a vanishing $(2-\frac{\beta-\alpha}{2})$-Carleson measure.

$(ii)$  $\mu_n=o\left(\frac{1}{n^{2-\frac{\beta-\alpha}{2}}}\right).$

$(iii)$ $\mathcal{DH}_\mu$ is compact operator from $\mathcal{D}_\alpha$ into $\mathcal{D}_\beta.$
\end{theorem}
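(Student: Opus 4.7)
The plan is to prove the cycle $(i) \Rightarrow (ii) \Rightarrow (iii) \Rightarrow (i)$, in close parallel to the proof of Theorem \ref{th22} but with vanishing estimates replacing the big-$O$ ones. Two of the three implications are quantitative refinements of arguments already in the paper, while $(ii) \Rightarrow (iii)$ requires a genuinely new step to produce compactness.

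For $(i) \Rightarrow (ii)$, I would reuse the moment inequality $\mu_n \leq n\int_0^1 t^{n-1}\mu[t,1)\,dt$ from the proof of Theorem \ref{th22}. Given the vanishing Carleson hypothesis, for every $\varepsilon > 0$ there exists $r \in (0,1)$ with $\mu[s,1) \leq \varepsilon(1-s)^{2-(\beta-\alpha)/2}$ for $s \geq r$. Splitting the integral at $r$, the tail over $[r,1)$ contributes at most a constant multiple of $\varepsilon\, n^{-(2-(\beta-\alpha)/2)}$ by the Beta-function computation already used in Theorem \ref{th22}, while the head over $[0,r]$ is $O(r^n)$, which decays faster than any polynomial. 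Since $\varepsilon$ was arbitrary, $\mu_n = o(n^{-(2-(\beta-\alpha)/2)})$.

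For $(ii) \Rightarrow (iii)$, I would factor $\mathcal{DH}_\mu = T_\beta \circ S_\mu \circ V_\alpha$ exactly as in Theorem \ref{th22}. Since $V_\alpha$ and $T_\beta$ are bounded isomorphisms onto/from $\mathcal{H}^2$, it suffices to show that $S_\mu$ is compact on $\mathcal{H}^2$. The strategy is to approximate $S_\mu$ in operator norm by finite-rank truncations $S_\mu^{(N)}$ obtained by zeroing the matrix entries whose indices satisfy $n+k \geq N$. Using $\mu_n = o(n^{-(2-(\beta-\alpha)/2)})$, the entries of the complementary operator $S_\mu - S_\mu^{(N)}$ satisfy
\[
|M_{n,k}| \;\lesssim\; \varepsilon_N\, \frac{n^{(3-\beta)/2}\, k^{(\alpha-1)/2}}{(n+k)^{2-(\beta-\alpha)/2}}, \qquad n+k \geq N,
\]
with $\varepsilon_N \to 0$ as $N \to \infty$. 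Feeding the very same kernel $K(x,y) = x^{(3-\beta)/2}y^{(\alpha-1)/2}(x+y)^{-(2-(\beta-\alpha)/2)}$ into Lemma \ref{le21} then gives $\|S_\mu - S_\mu^{(N)}\|_{\mathcal{H}^2 \to \mathcal{H}^2} \lesssim \varepsilon_N \to 0$. Hence $S_\mu$ is a norm limit of finite-rank operators and therefore compact. The main obstacle is arranging the truncation so that the tail estimate on the matrix entries is controlled by a \emph{uniform} constant that vanishes with $N$; this is why the diagonal cutoff $n+k \geq N$ (which interacts properly with the decay of $\mu_{n+k}$) is preferable to a one-sided cutoff like $k \geq N$.

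For $(iii) \Rightarrow (i)$, I would reuse the test functions $f_t(z) = (1-t^2)^{1-\alpha/2}/(1-tz)$ from Theorem \ref{th22}, which are bounded in $\mathcal{D}_\alpha$ and tend to zero uniformly on compact subsets of $\mathbb{D}$ as $t \to 1^{-}$. Since $\mathcal{D}_\alpha$ is a reproducing kernel Hilbert space, this forces $f_t \rightharpoonup 0$ weakly, and compactness of $\mathcal{DH}_\mu$ then yields $\|\mathcal{DH}_\mu(f_t)\|_{\mathcal{D}_\beta} \to 0$. Combining this with the lower bound
\[
\|\mathcal{DH}_\mu(f_t)\|_{\mathcal{D}_\beta}^2 \;\gtrsim\; \frac{(\mu[t,1))^2}{(1-t^2)^{4+\alpha-\beta}}
\]
already established in the proof of Theorem \ref{th22} immediately gives $\mu[t,1) = o\bigl((1-t)^{2-(\beta-\alpha)/2}\bigr)$, which is precisely (i).
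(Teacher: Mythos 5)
Your proposal is correct and follows essentially the same route as the paper: the same factorization $\mathcal{DH}_\mu = T_\beta\circ S_\mu\circ V_\alpha$, the same appeal to Lemma \ref{le21} to bound the tail operator by $\varepsilon$ in norm, and the same test functions $f_t$ with the weak-convergence argument for $(iii)\Rightarrow(i)$. The only cosmetic difference is your diagonal truncation $n+k\geq N$; the paper truncates at $n>m$ instead, which works equally well because $\mu_{n,k}=\mu_{n+k}$ and $n+k\geq n$, so that remainder also consists entirely of small entries.
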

\begin{proof}

$(i)\Rightarrow(ii)$.
It is similar to the previous proof and will not be repeated.

$(ii)\Rightarrow(iii)$.
Take $f(z)=\sum_{n=0}^\infty a_nz^n \in \mathcal{D}_\alpha$. Let
$$S_{\mu ,m}(f)(z)=\sum_{n=0}^m\left(\sum_{k=0}^\infty(n+1)^{\frac{3-\beta}{2}}(k+1)^{\frac{a-1}{2}}\mu_{n,k}a_k\right)z^n,$$
$$\mathcal{DH}_{\mu ,m}(f)(z)=\sum_{n=0}^m\left(\sum_{k=0}^\infty (n+1)\mu_{n,k}a_k\right)z^n.$$
Notice that $S_{\mu ,m}, \mathcal{DH}_{\mu ,m}$ are  finite rank operators, then $S_{\mu ,m}(f)(z)$ is compact on $\mathcal{H}^2.$
Since $\mu_n$ satisfies $\mu_n=o({n^{-(2-\frac{\beta-\alpha}{2})}})$, we obtain that for any $\varepsilon > 0,$ there exists an $N>0$ such that $|\mu_m|<\varepsilon n^{-(2-\frac{\beta-\alpha}{2})}$ when $m>N$. Then we note
$$
(S_\mu-S_{\mu ,m})(f)(z)=\sum_{n=m+1}^\infty\left(\sum_{k=0}^\infty(n+1)^{\frac{3-\beta}{2}}(k+1)^{\frac{a-1}{2}}\mu_{n,k}a_k\right)z^n,
$$
$$
\begin{aligned}
\left(T_\beta \circ S_\mu \circ V_a-T_\beta \circ S_{\mu ,m} \circ V_a\right)(f)(z)
&=\sum_{n=m+1}^\infty\left(\sum_{k=0}^\infty(n+1)\mu_{n,k}a_k\right)z^n\\
&=T_\beta \circ \left(S_\mu-S_{\mu ,m}\right) \circ V_a(f)(z)\\
&=(\mathcal{DH}_\mu-\mathcal{DH}_{\mu ,m})(f)(z).
\end{aligned}
$$
Therefore,
$$
\|(S_\mu-S_{\mu ,m})(f)(z)\|_{\mathcal{H}_2}^2=\sum_{n=m+1}^\infty\left|\sum_{k=0}^\infty (n+1)^{\frac{3-\beta}{2}}(k+1)^{\frac{\alpha-1}{2}}\mu_{n,k}a_k\right|^2.
$$
Then for $m>N,$ we have
$$
\|(S_\mu-S_{\mu ,m})(f)(z)\|_{\mathcal{H}_2}^2\lesssim \varepsilon^2\sum_{n=m+1}^\infty\left(\sum_{k=0}^\infty(n+1)^{\frac{3-\beta}{2}}(k+1)^{\frac{\alpha-1}{2}}\frac{a_k}{(n+k+2)^{2-\frac{\beta-\alpha}{2}}}\right)^2.
$$
By Lemma $\ref{le21}$ and the proof of Theorem $\ref{th22}$, we obtain

$$
\|(S_\mu-S_{\mu ,m})(f)(z)\|_{\mathcal{H}_2}^2\lesssim \varepsilon^2 \|f\|_{\mathcal{H}^2}^2.
$$
Thus,

$$
\|S_\mu-S_{\mu ,m}\|_{\mathcal{H}_2\rightarrow\mathcal{H}_2}\lesssim \varepsilon.
$$
It is clear that
$$
\|\mathcal{DH}_\mu-\mathcal{DH}_{\mu,m}\|_{\mathcal{D}_{\alpha}\rightarrow\mathcal{D}_\beta}
 \lesssim\varepsilon.
$$
Hence, $\mathcal{DH}_\mu$ is compact from $\mathcal{D}_\alpha$ into $\mathcal{D}_\beta$.

$(iii)\Rightarrow(i)$.
For $0 < t < 1,$ let $f_t(z)=(1-t^2)^{1-\frac{a}{2}}\sum_{n=0}^\infty t^nz^n,$ we have
$$\|f_t\|_{\mathcal{D}_\alpha}^2 = (1-t^2)^{2-\alpha}\sum_{n=0}^\infty(n+1)^{1-\alpha}t^{2
n} \thickapprox 1,$$
and $\lim_{t\rightarrow1}f_t(z)=0$ for any $z\in \mathbb{D}$. Since all Hilbert spaces are reflexive, we obtain that $f_t$ is convergent weakly to 0 in $\mathcal{D}_\alpha$ as $t\rightarrow1$. By the assumption that $\mathcal{DH}_\mu$ is compact from $\mathcal{D}_\alpha$ into $\mathcal{D}_\beta,$ we have
$$\lim_{t\rightarrow 1}\|\mathcal{DH}_\mu (f_t)\|_{\mathcal{D}_\beta}=0.$$
Similar to the proof of Theorem \ref{th22}, we obtain that
$$
\mu[t,1)\lesssim (1-t)^{2-\frac{\beta-\alpha}{2}}\|\mathcal{DH}_\mu (f_t)\|_{\mathcal{D}_\beta}.
$$
Therefore,
$$
\lim_{t\rightarrow1}\frac{\mu[t,1))}{(1-t)^{2-\frac{\beta-\alpha}{2}}}=0.
$$
Thus, $\mu$ is a vanishing $(2-\frac{\beta-\alpha}{2})$-Carleson measure.
\end{proof}
\subsection*{Conflicts of Interest}
The authors declare that there are no conflicts of interest regarding the publication of this paper.
\subsection*{Availability of data and material}

The authors declare that all data and material in this paper are available.

\subsection*{Acknowledgments}
The research was supported by the National Natural Science Foundation of China (Grant Nos.11671357).


\begin{thebibliography}{99}	


	 \bibitem{Bao} G. Bao, H. Wulan, Hankel matrices acting on Dirichlet spaces, $J.~Math.~Anal.~Appl.$, \textbf{409} (2014), 228-235.


    \bibitem{Cha} C. Chatzifountas, D. Girela, J. \'A. Pel\'{a}ez, A generalized Hilbert matrix acting on Hardy spaces, $J.~Math.~Anal.~Appl.$, \textbf{413}, 1 (2014), 154-168.

    \bibitem{D} D. Blast, J . Pau, A characterization of Besov-type spaces and applications to Hankel-type opeDrators. Michigan Mathematical Journal, 2008, 9(2):401-417.[4].
  9.2(2008):401-417.

    \bibitem{Dia} E. Diamantopoulos, A. G. Siskakis, Composition operators and the Hilbert matrix, $Studia~ Math.$, \textbf{140}, 2 (2000), 191-198.

    \bibitem{Diam} E. Diamantopoulos, Hilbert matrix on Bergman spaces, $Ill.~J.~Math.$, \textbf{48}, 3 (2004), 1067-1078.

	\bibitem{Diama} E. Diamantopoulos, Operators induced by Hankel matrices on Dirichlet spaces, $Analysis$, \textbf{24}, 1-4 (2004), 345-360.

     \bibitem{Dur} P. L. Duren, Theory of $H^p$ Spaces, Academic press, New York, 1970.

    \bibitem{Duren} P. L. Duren, A. Schuster, Bergman spaces, American Mathematical Soc., 2004.


    \bibitem{El} O. El-Fallah, K. Kellay, J. Mashreghi, et al, A primer on the Dirichlet space, Cambridge University Press, 2014.

    \bibitem{Ga} P. Galanopoulos, J. \'A. Pel\'aez, A Hankel matrix acting on Hardy and Bergman spaces. $Studia~ Math.$, \textbf{200}, 3 (2010), 201-220.

    \bibitem{Alv} D. Girela, N. Merch$\acute{a}$n, Hankel matrices acting on the Hardy space $H^1$ and on Dirichlet spaces, $Rev.~Math.~Complut.$, \textbf{32}, 3 (2019), 799-822.

    \bibitem{Gire} D. Girela, N. A. Merch$\acute{a}$n, A generalized Hilbert operator acting on conformally invariant spaces, $Banach~J.~Math.~Anal.$, \textbf{12}, 2 (2018), 374-398.

	\bibitem{Hardy} G. H. Hardy, J. E. Littlewood, G. P\'olya, et al, Inequalities. Cambridge university press, 1952.

	 \bibitem{Ste}  D. Stegenga, Multipliers of the Dirichlet space. Illinois journal of mathematics, 24(1980):113-139.

	\bibitem{Ye} S. Ye, Z, Zhou, A Derivative-Hilbert Operator Acting on the Bloch Space. $Complex~Anal.~Oper.~Th.$,  \textbf{15}, (2021), 88.

    \bibitem{YeS} S. Ye, Z, Zhou, A Derivative-Hilbert operator acting on Bergman spaces. $J.~Math.~Anal.~Appl.$, \textbf{506}, (2022) 125553.

	\bibitem{Zhu} K. Zhu , Operator theory in function spaces. American Mathematical Soc., 2007.



\end{thebibliography}
\end{document}